\newtheorem{theorem}{Теорема}
\newtheorem{lemma}{Лемма}
\newtheorem{proposition}{Утверждение}
\title{Минимальные носители некоторых графов Хэмминга}
\author{Alexandr Valyuzhenich}
\begin{document}
\maketitle

\begin{abstract}
В работе найдены минимальные носители собственных функций графов Хэмминга $H(n,q)$ с собственным значением $n(q-1)-q$. Кроме того, получена полная характеризация собственных функций, на которых достигается минимальное значение носителя.

\end{abstract}
\section{Введение}
{\em Расстоянием Хэмминга} $d(x,y)$ между словами $x,y\in{\{0,1,\ldots,q-1\}^{n}}$ называется число позиций, в которых $x$ и $y$ различны.
{\em Графом Хэмминга} называется граф, вершины которого --- это все слова длины $n$ над алфавитом $\{0,1,\ldots,q-1\}$, а ребрами графа соединяются вершины на расстоянии Хэмминга $1$. Обозначим граф Хэмминга через $H(n,q)$.
Хорошо известно, что множество собственных значений матрицы смежности графа $H(n,q)$ --- это $\{\lambda_{m}=n(q-1)-qm|m=0,1,\ldots,n\}$.
Функция $f:H(n,q)\longrightarrow{\mathbb{R}}$ называется {\em собственной функцией} графа $H(n,q)$, отвечающей собственному значению $\lambda$, если
$Af=\lambda f$, где $A$ --- матрица смежности $H(n,q)$.
Пусть $f:H(n,q)\longrightarrow{\mathbb{R}}$. Множество $S(f)=\{x\in{H(n,q)}|f(x)\neq0)\}$ называется \emph{носителем функции} $f$.
Для носителя собственной функции известна следующая нижняя оценка:
\begin{theorem}[{\cite{vor}}]
Пусть  $f:H(n,q)\longrightarrow{\mathbb{R}}$ --- собственная функция, отвечающая собственному значению $\lambda_m$ и $f\not\equiv0$. Тогда $|S(f)|\geq{2^m(q-2)^{n-m}}$ для $\frac{mq^2}{2n(q-1)}>2$ и $|S(f)|\geq{q^n(\frac{1}{q-1})^{m/2}(\frac{m}{n-m})^{m/2}(1-\frac{m}{n})^{n/2}}$ для $\frac{mq^2}{2n(q-1)}\leq2$.
\end{theorem}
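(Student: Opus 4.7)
The plan is to exploit the tensor-product structure $H(n,q)=K_q\,\square\,\cdots\,\square\,K_q$. The adjacency operator decomposes as $A=\sum_{i=1}^{n}A_i$, and since $K_q$ has eigenvalues $q-1$ (multiplicity $1$) and $-1$ (multiplicity $q-1$), the $\lambda_m$-eigenspace $V_m$ has dimension $\binom{n}{m}(q-1)^m$ and a natural basis of products $\chi_a=\prod_i\chi_{a_i}$, where $a$ runs over words of Hamming weight $m$ (the characters of $\mathbb{Z}_q^n$ when $q$ is prime; otherwise one takes any $(-1)$-eigenvector basis in the $m$ ``special'' coordinates and the constant function in the remaining ones). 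I would prove the two bounds by genuinely different methods and then read off the theorem as the envelope of the two.

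For the analytic regime $mq^2/(2n(q-1))\leq 2$, I would run a Plancherel/Cauchy--Schwarz argument. Writing $f=\sum_{\operatorname{wt}(a)=m}\widehat f(a)\chi_a$, Parseval gives $\|f\|_2^2=q^n\sum_a|\widehat f(a)|^2$, Cauchy--Schwarz across characters (using $|\chi_a(x)|=1$) gives $\|f\|_\infty^2\leq\dim V_m\cdot\sum_a|\widehat f(a)|^2$, and the comparison $\|f\|_2^4\leq|S(f)|\,\|f\|_4^4$ combined with the observation that $f^2$ has Fourier support inside $V_0\oplus\cdots\oplus V_{2m}$ upgrades the trivial uncertainty estimate $|S(f)|\geq q^n/\dim V_m$ to one of order $q^n/\sqrt{\dim V_m}$. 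Applying Stirling's approximation $\binom{n}{m}\sim n^n/(m^m(n-m)^{n-m})$ converts this bound into precisely $q^n(q-1)^{-m/2}(m/(n-m))^{m/2}(1-m/n)^{n/2}$.

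For the combinatorial regime $mq^2/(2n(q-1))>2$, I would induct on $n$ via fiber decomposition along the last coordinate. Setting $f_j(y)=f(y,j)$, the equation $Af=\lambda_m f$ becomes $A_{n-1}f_j+\sum_{k\neq j}f_k=\lambda_m f_j$. Summing shows that $g=\sum_j f_j$ is either zero or an eigenfunction of $H(n-1,q)$ for $\lambda_m^{(n-1)}=(n-1)(q-1)-mq$, while the deviations $h_j=f_j-g/q$ are either zero or eigenfunctions for $\lambda_{m-1}^{(n-1)}$. If $g\equiv0$ then the $f_j$ sum to zero pointwise, so at each point of $\bigcup_j S(f_j)$ at least two $f_j$ are nonzero, contributing a factor of $2$; the inductive bound applied to any nonzero $h_j$ in parameters $(n-1,m-1)$ then supplies the remaining $2^{m-1}(q-2)^{n-m}$. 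If $g\not\equiv 0$, the inductive bound on $g$ in parameters $(n-1,m)$ gives $|S(g)|\geq 2^m(q-2)^{n-1-m}$, and a separate analysis of how the vanishing loci of the $h_j$ can intersect $S(g)$ (using that a nontrivial combination of the all-ones vector with $(-1)$-eigenvectors of $K_q$ must be nonzero on at least $q-2$ positions of the added coordinate) contributes the remaining factor $q-2$.

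The main obstacle will be the passage between the two regimes: the induction in the combinatorial branch changes $(n,m)$ to $(n-1,m)$ or $(n-1,m-1)$, so the threshold condition $mq^2/(2n(q-1))>2$ need not be preserved by the projections $g$ and $h_j$. I would therefore phrase the inductive hypothesis as ``$|S(f)|$ is bounded below by the maximum of the two expressions in the theorem'', so that it can be invoked on projections whose parameters fall into the analytic regime without weakening the target for $f$. A key algebraic check underlying the whole argument is that the two expressions $2^m(q-2)^{n-m}$ and $q^n(q-1)^{-m/2}(m/(n-m))^{m/2}(1-m/n)^{n/2}$ cross precisely at $mq^2/(2n(q-1))=2$, which both explains the threshold in the statement and makes the combined induction self-consistent.
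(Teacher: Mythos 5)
First, a point of order: the paper does not prove this statement at all --- it is Theorem 1, imported from \cite{vor} and used as a black box --- so there is no internal proof to compare against; I can only assess your proposal on its own. Both branches have genuine gaps. In the combinatorial branch, the case $g\not\equiv 0$ rests on the claim that a vector of the form $c\mathbf{1}+v$ with $c\neq 0$ and $\sum_i v_i=0$ is nonzero in at least $q-2$ positions. This is false: $(1,0,\ldots,0)=\frac{1}{q}\mathbf{1}+v$ with $\sum_i v_i=0$ has a single nonzero position. So the pointwise count over $S(g)$ does not produce the factor $q-2$, and that is precisely the step where the bound $2^m(q-2)^{n-m}$ has to be earned (your $g\equiv 0$ case, and the identification of $g=\sum_j f_j$ and $h_j=f_j-g/q$ as eigenfunctions for $\lambda_m^{(n-1)}$ and $\lambda_{m-1}^{(n-1)}$ --- which is exactly the mechanism of Lemma~\ref{reduction} of the present paper --- are fine). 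In the analytic branch, the decisive step, upgrading $|S(f)|\geq q^n/\dim V_m$ to an estimate of order $q^n/\sqrt{\dim V_m}$ via $\|f\|_2^4\leq |S(f)|\,\|f\|_4^4$, is asserted rather than proved: the only input you offer is that $f^2$ has Fourier support in $V_0\oplus\cdots\oplus V_{2m}$, but that space has dimension far larger than $\dim V_m$, so no bound of the required strength on $\|f\|_4^4/\|f\|_2^4$ follows from it.

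The structural fix you propose is also based on a false premise. You suggest running the induction with the hypothesis ``$|S(f)|$ is at least the maximum of the two expressions,'' motivated by the (correct, and nicely verified) identity that the two expressions coincide at $mq^2=4n(q-1)$. But they are tangent there, not crossing: just inside the regime $\frac{mq^2}{2n(q-1)}>2$ the second expression is still the larger of the two (e.g.\ $q=3$, $m/n=0.95$ gives bases $\approx 1.955$ versus $2^{0.95}\approx 1.932$), so the theorem is asserting the \emph{smaller} bound there --- because the larger one is simply not true in that regime. Concretely, for $q=3$ and $m=n-1$ the product of $n-1$ copies of the vector $(1,-1,0)$ with one all-ones factor is a $\lambda_{n-1}$-eigenfunction of $H(n,3)$ with support $3\cdot 2^{n-1}$, while the second expression grows like $(3/\sqrt{2})^{\,n}$ and exceeds $3\cdot 2^{n-1}$ for large $n$. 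Hence the strengthened inductive hypothesis is a false statement and the induction cannot close in the form you describe; any correct argument must either keep the first bound as the universally valid inductive invariant and prove the second bound by a separate, non-inductive method, or track carefully how the regimes degrade under the projections $g$ and $h_j$.
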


Из результатов работы \cite{pot} следует, что для мощности носителя собственной функции $f:H(n,q)\longrightarrow{\{-1,0,1\}}$, отвечающей собственному значению $\lambda=q(n-m)-n$, выполнена нижняя оценка $|S(f)|\geq{2^m}$.
\section{Лемма о редукции}
Множество вершин $x$ графа $H(n,q)$, у которых $i$--я координата равна $k$, обозначим через $T_{k}(i,n)$.

Пусть $t=(t_1,t_2,\ldots,t_n)$ --- произвольная вершина графа $H(n,q)$. Рассмотрим вектора $x=(t_1,\ldots,t_{i-1},k,t_{i},\ldots,t_n)$ и $y=(t_1,\ldots,t_{i-1},m,t_{i},\ldots,t_n)$ длины $n+1$. Заметим, что $x\in{T_{k}(i,n+1)}$ и $y\in{T_{m}(i,n+1)}$, и вектор $t$ получается из вершин $x$ и $y$ удалением $i$--ой координаты.
Определим функцию $g_{i,k,m}:H(n,q)\longrightarrow{\mathbb{R}}$ по правилу $g_{i,k,m}(t)=f(x)-f(y)$.
\begin{lemma}\label{reduction}
Пусть  $f:H(n+1,q)\longrightarrow{\mathbb{R}}$ --- собственная функция, отвечающая собственному значению $\lambda$. Тогда $g_{i,k,m}(t)$ --- собственная функция в графе $H(n,q)$, отвечающая собственному значению $\lambda+1$.
\end{lemma}
\begin{proof}
Пусть $t=(t_1,t_2,\ldots,t_n)$ --- произвольная вершина графа $H(n,q)$. Рассмотрим вершины $x=(t_1,\ldots,t_{i-1},k,t_{i},\ldots,t_n)$ и $y=(t_1,\ldots,t_{i-1},m,t_{i},\ldots,t_n)$ графа $H(n+1,q)$. Заметим, что $x\in{T_{k}(i,n+1)}$ и $y\in{T_{m}(i,n+1)}$, причем $x$ и $y$ смежны. Пусть $x_1,x_2,\ldots,x_s$ --- соседи $x$ среди вершин из $T_{k}(i,n+1)$.  Заметим, что $y$ среди вершин из $T_{m}(i,n+1)$ имеет соседей $y_1,y_2,\ldots,y_s$, причем $x_t$ и $y_t$ отличаются только в $i$--ой координате. Вектор длины $n$, полученный удалением из вершин $x_t$ и $y_t$ $i$--ой координаты, обозначим через $z_t$. Пусть $p_t$ --- сосед вершины $x$, лежащий в $T_{t}(i,n+1)$, и $P=\{p_0,p_1,\ldots,p_{q-1}\}$. Тогда $N(x)=\{x_1,x_2,\ldots,x_s\}\cup\{P\setminus{x}\}$. Так как $f$ --- собственная функция, то имеем
\begin{equation}\label{eq 1}
\lambda f(x)=\sum_{i=1}^{s}f(x_i)+\sum_{i=0}^{q-1}f(p_i)-f(x).
\end{equation}

Аналогичным образом имеем $N(y)=\{y_1,y_2,\ldots,y_s\}\cup\{P\setminus{y}\}$. Так как $f$ --- собственная функция, то имеем
\begin{equation}\label{eq 2}
\lambda f(y)=\sum_{i=1}^{s}f(y_i)+\sum_{i=0}^{q-1}f(p_i)-f(y).
\end{equation}
Вычитая из соотношения~\ref{eq 1} соотношение~\ref{eq 2}, получаем

$(\lambda+1)(f(x)-f(y))=\sum_{i=1}^{s}(f(x_i)-f(y_i))$. Тогда $(\lambda+1)g_{i,k,m}(t)=\sum_{i=1}^{s}g_{i,k,m}(z_i)$ для произвольной вершины $t$. Так как в графе $H(n,q)$ вершина $t$ имеет соседей $z_1,z_2,\ldots,z_s$, то $g_{i,k,m}(t)$ --- собственная функция в графе $H(n,q)$.

\end{proof}

Фунцию $f:H(n+1,q)\longrightarrow{\mathbb{R}}$ будем называть {\em аддитивной}, если для любых допустимых $i,k,m$ функция $g_{i,k,m}$ является константой.

\begin{lemma}\label{L2}
Пусть  $f:H(n+1,q)\longrightarrow{\mathbb{R}}$ --- собственная функция, отвечающая собственному значению $\lambda_1$. Тогда $f$ --- аддитивная функция.
\end{lemma}
\begin{proof}
Достаточно доказать, что для всех допустимых $i$, $j$ и $p$ функция $g_{p,i,j}$ является константой. По лемме~\ref{reduction} функция $g_{p,i,j}$ является собственной функцией графа $H(n,q)$, отвечающей собственному значению $\lambda_0$. Так как $\lambda_0$ имеет кратность $1$, то любая собственная функция графа $H(n,q)$, отвечающая собственному значению $\lambda_0$, является константой. Лемма доказана.

\end{proof}

\section{Основная теорема}
Множество вершин $x$ графа $H(n,q)$, у которых $i$--я координата равна $k$, обозначим через $T_{k}(i,n)$.
\begin{lemma}\label{L3}
Пусть  $f:H(2,q)\longrightarrow{\mathbb{R}}$ --- аддитивная функция, $|S(f)|\leq{2(q-1)}$. Тогда выполнено одно из следующих условий:
\begin{enumerate}
\item $f\equiv0$.

\item $f(x)=c$ при $x\in{T_{k}(i,2)}$ и $f(x)=0$ в остальных случаях, где $c\neq{0}$ --- некоторая константа.

\item
$
f(x)=\begin{cases}
c,&\text{при $x\in{T_{k}(i,2)}\setminus{T_{m}(j,2)}$;}\\
-c,&\text{при $x\in{T_{m}(j,2)}\setminus{T_{k}(i,2)}$;}\\
0,&\text{иначе.}
\end{cases}
,$ где  $c\neq{0}$ --- некоторая константа и $i\neq{j}$.
\end{enumerate}
\end{lemma}
\begin{proof}
Так как $|S(f)|\leq{2(q-1)}$, то $f(x)=0$ для некоторого $x$. Без ограничения общности, пусть $f(x_0)=0$, где $x_0=(0,0)$. Значения $f$ при $x=(0,j)$ и $x=(i,0)$ обозначим через $a_j$ и $b_i$ соответственно. Пусть среди чисел $a_1,\ldots,a_{q-1}$ ровно $k$ ненулевых, среди $b_1,\ldots,b_{q-1}$ ровно $s$ ненулевых. Множество вершин $x=(i,j)$ таких, что $j>0$, $f(y)\neq{0}$ для $y=(i,0)$ и $f(z)={0}$ для $z=(0,j)$, обозначим через $B$. Множество вершин $x=(i,j)$ таких, что $i>0$, $f(y)={0}$ для $y=(i,0)$ и $f(z)\neq{0}$ для $z=(0,j)$, обозначим через $C$.

Пусть $x=(i,j)$, $y=(0,j)$ и $z=(i,0)$.
Так $f$ --- аддитивная функция, то $f(x)-f(y)=f(z)-f(x_0)$. Отсюда $f(x)=a_j+b_i$. Пусть $x=(i,j)\in{B}$. Тогда $f(x)=a_j+b_i=b_i\neq{0}$. Аналогично имеем, что $f(x)=a_j\neq{0}$ для $x=(i,j)\in{C}$. Значит $|S(f)|\geq{k+s+|B|+|C|}$. Поэтому $|S(f)|\geq{k+s+(q-k-1)s+(q-s-1)k}$. Рассмотрим два случая.

\textbf{Случай 1.} $k\geq{1}$ и $s\geq{1}$. Имеем $|S(f)|\geq{k+s+(q-k-1)s+(q-s-1)k}$. Поэтому $|S(f)|\geq{2(q-1)+(q-k-1)(s-1)+(q-s-1)(k-1)}$. С другой стороны,  $|S(f)|\leq{2(q-1)}$. Поэтому либо $k=s=q-1$, либо $k=s=1$. Пусть $k=s=q-1$. Тогда $f(x)=a_j+b_i=0$ для $i>0$ и $j>0$. Поэтому все $a_i$ равны $c$, все $b_j$ равны $-c$, где $c$ --- некоторая константа. Поэтому в этом случае

$f(x)=\begin{cases}
c,&\text{при $x\in{T_{0}(2,2)}\setminus{T_{0}(1,2)}$;}\\
-c,&\text{при $x\in{T_{0}(1,2)}\setminus{T_{0}(2,2)}$;}\\
0,&\text{иначе.}
\end{cases}
$.

В случае $k=s=1$ аналогичным образом получаем, что

$f(x)=\begin{cases}
c,&\text{при $x\in{T_{1}(2,2)}\setminus{T_{1}(1,2)}$;}\\
-c,&\text{при $x\in{T_{1}(1,2)}\setminus{T_{1}(2,2)}$;}\\
0,&\text{иначе.}
\end{cases}
$.

\textbf{Случай 2.} $k=0$ или $s=0$. Без ограничения общности, пусть $k=0$. В этом случае $|B|=(q-1)s$. Так как $|S(f)|\leq{2(q-1)}$, то $s<2$ или $s=0$. Если $s=0$, то $f\equiv0$. Осталось рассмотреть случай $s=1$. В этом случае $S(f)=B$, $f(x)=b_t$ для некоторого $t$ при $x\in{B}$ и $f(x)=0$ для остальных $x$.

\end{proof}

Множество вершин $x$ графа $H(n,q)$, у которых $i$--я координата равна $k$, а $j$--я равна $m$, обозначим через $T_{k,m}(i,j,n)$ ($i\neq{j}$).
\begin{theorem}\label{T2}
Пусть  $f:H(n,q)\longrightarrow{\mathbb{R}}$ --- аддитивная функция, $|S(f)|\leq{2(q-1)q^{n-2}}$, $q>2$ и $n>1$. Тогда выполнено одно из следующих условий:
\begin{enumerate}
\item $f\equiv0$.

\item $f(x)=c$ при $x\in{T_{k}(i,n)}$ и $f(x)=0$ в остальных случаях, где $c\neq{0}$ --- некоторая константа.

\item
$
f(x)=\begin{cases}
c,&\text{при $x\in{T_{k}(i,n)}\setminus{T_{m}(j,n)}$;}\\
-c,&\text{при $x\in{T_{m}(j,n)}\setminus{T_{k}(i,n)}$;}\\
0,&\text{иначе.}
\end{cases}
,$ где  $c\neq{0}$ --- некоторая константа и $i\neq{j}$.
\end{enumerate}
\end{theorem}
\begin{proof}
Докажем теорему индукцией по $n$. База для $n=2$ доказана в лемме~\ref{L3}.

Докажем переход от $n$ к $n+1$. Пусть  $f:H(n+1,q)\longrightarrow{\mathbb{R}}$ --- аддитивная функция. Функцию $f|_{T_{i}(n+1,n+1)}$ обозначим через $f_i$. Так как $f$ --- аддитивная функция, то для всех $i$  имеем $g_{n+1,i,0}\equiv{c_i}$ для некоторой константы $c_i$. Пусть среди чисел $c_0,c_1,\ldots,c_{q-1}$ ровно $s$ ненулевых.  Так как $|S(f)|\leq{2(q-1)q^{n-1}}$, то для некоторого $i$ имеем $|S(f_i)|\leq{2(q-1)q^{n-2}}$. Без ограничения общности, пусть $i=0$. Таким образом, $|S(f_0)|\leq{2(q-1)q^{n-2}}$ и $f_0$ --- аддитивная функция. Тогда по предположению индукции для $f_0$ возможны три варианта.

\textbf{Случай 1.} В этом случае $f_0\equiv0$. Тогда $|S(f)|=s|S(f_0)|=sq^n$. Так как $|S(f)|\leq{2(q-1)q^{n-1}}$, то $s<2$. Если $s=0$, то $f_i\equiv0$ для всех $i$. Отсюда $f\equiv0$. Пусть $s=1$ и $c_k\neq{0}$. Тогда $f(x)=c_k$ при $x\in{T_{k}(n+1,n+1)}$ и $f(x)=0$ в остальных случаях.

\textbf{Случай 2.} В этом случае $f_0(x)=c$ при $x\in{T_{0,k}(n+1,i,n+1)}$ и $f(x)=0$ в остальных случаях, где $c\neq{0}$ --- некоторая константа.  Заметим, что для $c_i\neq{0}$ выполнено неравенство $|S(f_i)|\geq{|T_{i}(n+1,n+1)|-|S(f_0)|}$, причем равенство выполняется только для $c_i=-c$. Отсюда $|S(f_i)|\geq{(q-1)q^{n-1}}$, причем равенство выполняется только для $c_i=-c$.

Тогда $|S(f)|\geq{|S(f_0)|+s(q-1)q^{n-1}}$. Так как $|S(f)|\leq{2(q-1)q^{n-1}}$, то $s<2$. Если $s=0$, то $f_m(x)=c$ при $x\in{T_{m,k}(n+1,i,n+1)}$ и $f(x)=0$ в остальных случаях для всех $m$. Отсюда $f(x)=c$ при $x\in{T_{k}(i,n+1)}$ и $f(x)=0$ в остальных случаях. Пусть $s=1$ и $c_m\neq{0}$. Тогда $|S(f_i)|=|S(f_0)|=q^{n-1}$ при $i\neq{m}$.  Так как $|S(f)|\leq{2(q-1)q^{n-1}}$, то $|S(f_m)|\leq{(q-1)q^{n-1}}$. Поэтому $|S(f_m)|=(q-1)q^{n-1}$ и $c_m=-c$.

Отсюда
$f(x)=\begin{cases}
c,&\text{при $x\in{T_{k}(i,n+1)}\setminus{T_{m}(n+1,n+1)}$;}\\
-c,&\text{при $x\in{T_{m}(n+1,n+1)}\setminus{T_{k}(i,n+1)}$;}\\
0,&\text{иначе.}
\end{cases}
$

\textbf{Случай 3.} В этом случае $|S(f_0)|=2(q-1)q^{n-2}$.  Так как для $c_i\neq{0}$ выполнено неравенство $|S(f_i)|\geq{|T_{i}(n+1,n+1)|-|S(f_0)|}$, то $|S(f_i)|\geq{q^n-2(q-1)q^{n-2}}$ для $c_i\neq{0}$. Поэтому $|S(f_i)|>{2(q-1)q^{n-2}}$ при $c_i\neq{0}$ и $q\geq{3}$. Таким образом,  $|S(f)|\geq{q(2(q-1)q^{n-2})}$, причем равенство возможно только в случае, когда все $c_i$ равны нулю. Следовательно, $c_i=0$ для любого $i$.

Поэтому для любого $s$ имеем

$f_s(x)=\begin{cases}
c,&\text{при $x\in{T_{s,k}(n+1,i,n)}\setminus{T_{s,m}(n+1,j,n)}$;}\\
-c,&\text{при $x\in{T_{s,m}(n+1,j,n)}\setminus{T_{s,k}(n+1,i,n)}$;}\\
0,&\text{иначе.}
\end{cases}
$.

Следовательно, $f(x)=\begin{cases}
c,&\text{при $x\in{T_{k}(i,n+1)}\setminus{T_{m}(j,n+1)}$;}\\
-c,&\text{при $x\in{T_{m}(j,n+1)}\setminus{T_{k}(i,n+1)}$;}\\
0,&\text{иначе.}
\end{cases}
$.

\end{proof}

\begin{proposition}\label{Pr1}
Пусть
$f(x)=\begin{cases}
c,&\text{при $x\in{T_{k}(i,n)}\setminus{T_{m}(j,n)}$;}\\
-c,&\text{при $x\in{T_{m}(j,n)}\setminus{T_{k}(i,n)}$;}\\
0,&\text{иначе.}
\end{cases}
$

где  $c\neq{0}$ --- некоторая константа, $i,j,k,m$ --- некоторые числа.
Тогда  $f$ --- собственная функция, отвечающая собственному значению $\lambda_1$.
\end{proposition}
\begin{proof}

Пусть $x$ --- произвольная вершина $H(n,q)$, для которой $f(x)=c$. Тогда $x$ имеет $(n-1)(q-1)-1$ соседей в $T_{k}(i,n)\setminus{T_{m}(j,n)}$ и не имеет соседей в  $T_{m}(j,n)\setminus T_{k}(i,n)$ . Тогда сумма значений функции $f$ по всем соседям $x$ равна $c(n-1)(q-1)-c$, то есть равна $\lambda_1f(x)$. Случай $f(x)=-c$ рассматривается аналогично.

Рассмотрим вершину $x$, для которой $f(x)=0$. Эта вершина имеет одного соседа из $T_{k}(i,n)
\setminus T_{m}(j,n)$ и одного соседа из $T_{m}(j,n)\setminus T_{k}(i,n)$. Тогда сумма значений функции $f$ по всем соседям $x$ равна $0$, то есть равна $\lambda_1f(x)$.

\end{proof}

Теперь докажем основную теорему данной работы.
\begin{theorem}
Пусть  $f:H(n,q)\longrightarrow{\mathbb{R}}$ --- собственная функция, отвечающая собственному значению $\lambda_1$ и $q>2$. Тогда $|S(f)|\geq{2(q-1)q^{n-2}}$. Более того, если $|S(f)|={2(q-1)q^{n-2}}$, то
$
f(x)=\begin{cases}
c,&\text{при $x\in{T_{k}(i,n)}\setminus{T_{m}(j,n)}$;}\\
-c,&\text{при $x\in{T_{m}(j,n)}\setminus{T_{k}(i,n)}$;}\\
0,&\text{иначе.}
\end{cases}
,$

где  $c\neq{0}$ --- некоторая константа, $i,j,k,m$ --- некоторые числа, причем $i\neq{j}$.

\end{theorem}
\begin{proof}
Пусть $f$ --- собственная функция с минимальным носителем. По утверждению~\ref{Pr1} $|S(f)|\leq{2(q-1)q^{n-2}}$.
Так как $f$ --- собственная функция, то по лемме~\ref{L2} $f$ --- аддитивная функция. Так как $|S(f)|\leq{2(q-1)q^{n-2}}$, то по теореме~\ref{T2} для $f$ возможны три варианта. Учитывая, что $f$ --- собственная функция, получаем, что возможен только третий вариант, что и доказывает теорему.

\end{proof}

\section{Благодарности}
Автор выражает глубокую благодарность Д. С. Кротову, И. Ю. Могильных, К. В. Воробьеву и В. Н. Потапову за полезные замечания и обсуждения.

\end{document}